\newtheorem{theorem}{Theorem}[section]
\newtheorem{corollary}[theorem] {Corollary}
\newtheorem{definition}[theorem]{Definition}
\title{This is the title}
\begin{document}
\begin{center}
{\bf{DILATION THEOREM FOR  p-APPROXIMATE SCHAUDER FRAMES FOR SEPARABLE BANACH SPACES}}\\
K. MAHESH KRISHNA AND P. SAM JOHNSON  \\
Department of Mathematical and Computational Sciences\\ 
National Institute of Technology Karnataka (NITK), Surathkal\\
Mangaluru 575 025, India  \\
Emails: kmaheshak@gmail.com,  sam@nitk.edu.in\\

Date: \today
\end{center}

\hrule
\vspace{0.5cm}
\textbf{Abstract}: Famous Naimark-Han-Larson dilation theorem for frames in Hilbert spaces states that every frame for a separable Hilbert space $\mathcal{H}$ is image of a Riesz basis under an orthogonal projection  from a separable Hilbert space $\mathcal{H}_1$ which contains  $\mathcal{H}$ isometrically. In this paper, we derive dilation result for p-approximate Schauder frames for separable Banach spaces. Our result contains Naimark-Han-Larson dilation theorem as a particular case. 

\textbf{Keywords}: Dilation,  Frame, Approximate Schauder Frame.

\textbf{Mathematics Subject Classification (2020)}: 47A20, 42C15, 46B25.


\section{Introduction}
Let  $\mathbb{K}=\mathbb{R}$ or $\mathbb{C}$ and  $\mathcal{H}$ be a separable Hilbert space over $\mathbb{K}$. Folklore definitions of Riesz basis for $\mathcal{H}$ and frame for  $\mathcal{H}$ are the following. 
\begin{definition}\cite{BARI1, BARI2}
	A sequence $\{\tau_n\}_n$ in  $\mathcal{H}$ is said to be a Riesz basis for 
	 $\mathcal{H}$ if there exists an orthonormal basis $\{\omega_n\}_n$ for  $\mathcal{H}$ and a bounded invertible linear operator $T: \mathcal{H}\to \mathcal{H}$ such that 
	\begin{align*}
	T\omega_n=\tau_n, \quad \forall n \in \mathbb{N}.
	\end{align*}
\end{definition}
 \begin{definition}\cite{DUFFIN}
A sequence $\{\tau_n\}_n$ in  $\mathcal{H}$ is said to be a
frame for $\mathcal{H}$ if there exist $a,b>0$ such that 
\begin{align*}
a\|h\|^2 \leq \sum_{n=1}^\infty |\langle h, \tau_n\rangle|^2\leq b\|h\|^2, \quad \forall h \in \mathcal{H}.
\end{align*}
\end{definition}
Dilation theory usually tries to extend  operator on Hilbert space to larger Hilbert space which are easier to handle as well as well-understood and study the original operator  as a slice of it \cite{LEVYSHALIT, ARVESON, NAGY}. As long as frame theory for Hilbert spaces is considered,  following theorem is known as   Naimark-Han-Larson dilation theorem. This was proved independently by Han and Larson in 2000 \cite{HANMEMOIRS} and by Kashin and Kukilova in 2002 \cite{KASHINKULIKOVA}. We refer the reader to  \cite{CZAJA} for the history of  this theorem.
\begin{theorem}(Naimark-Han-Larson dilation theorem) \cite{HANMEMOIRS, KASHINKULIKOVA}\label{NHLDILATIONTHEOREM}
 Let $\{\tau_n\}_n$ be  a  frame  for  $ \mathcal{H}$.  Then there exist a Hilbert space $ \mathcal{H}_1 $ which contains $ \mathcal{H}$ isometrically and  a Riesz basis $\{\omega_n\}_n$ for  $ \mathcal{H}_1$ such that 
 \begin{align*}
 \tau_n=P\omega_n, \quad\forall n \in \mathbb{N},
 \end{align*}
  where $P$ is the orthogonal projection from $\mathcal{H}_1$ onto $\mathcal{H}$. 	
\end{theorem}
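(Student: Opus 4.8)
The plan is to build the dilation directly from the frame operator $S$ of $\{\tau_n\}_n$ and the analysis operator of its canonical tight frame. Recall that $S\colon\mathcal{H}\to\mathcal{H}$, $Sh=\sum_n\langle h,\tau_n\rangle\tau_n$, is bounded, positive and invertible (the lower and upper frame bounds give $aI\leq S\leq bI$), so $S^{1/2}$ and $S^{-1/2}$ are well defined. The first step is to reduce to a Parseval frame: the rescaled family $f_n:=S^{-1/2}\tau_n$ satisfies $\sum_n|\langle h,f_n\rangle|^2=\|h\|^2$ for every $h$, i.e. it is a tight frame with bound $1$. I single this reduction out because the Parseval property is exactly what will produce an \emph{isometric} embedding, and isometry (not merely boundedness) of the embedding is the crux of the statement.

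Next I would take $\mathcal{H}_1:=\ell^2(\mathbb{N})$ with its standard orthonormal basis $\{e_n\}_n$ and consider the analysis operator $\Theta_0\colon\mathcal{H}\to\ell^2(\mathbb{N})$, $\Theta_0 h=\{\langle h,f_n\rangle\}_n$. Since $\{f_n\}_n$ is Parseval, $\|\Theta_0 h\|=\|h\|$, so $\Theta_0$ is an isometry and $\Theta_0^*\Theta_0=I_{\mathcal{H}}$; its range $V:=\Theta_0(\mathcal{H})$ is a closed subspace and $P:=\Theta_0\Theta_0^*$ is the orthogonal projection of $\ell^2(\mathbb{N})$ onto $V$. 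Identifying $\mathcal{H}$ with $V$ through $\Theta_0$ realizes $\mathcal{H}$ isometrically inside $\mathcal{H}_1$. A one-line computation $Pe_n=\Theta_0\Theta_0^* e_n=\Theta_0 f_n$ already proves the theorem for the Parseval frame, since $\{e_n\}_n$ is an orthonormal, hence Riesz, basis.

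To recover the original frame $\tau_n=S^{1/2}f_n$ I would twist $\{e_n\}_n$ by a carefully chosen invertible operator. Set $B:=\Theta_0 S^{1/2}\Theta_0^*$ on $\ell^2(\mathbb{N})$; using $\Theta_0^*\Theta_0=I_{\mathcal{H}}$ and $\Theta_0^*|_{V^\perp}=0$ one checks that $B$ maps $V$ isomorphically onto $V$ (with inverse $\Theta_0 S^{-1/2}\Theta_0^*$ on $V$) and vanishes on $V^\perp$. Define $L:=B+(I-P)$, which in the decomposition $\ell^2(\mathbb{N})=V\oplus V^\perp$ is block-diagonal with invertible blocks $B|_V$ and $I_{V^\perp}$, hence bounded and boundedly invertible. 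Then $\{\omega_n\}_n:=\{Le_n\}_n$ is, by the paper's definition, a Riesz basis for $\mathcal{H}_1$, and $P\omega_n=PLe_n=Be_n=\Theta_0 S^{1/2}f_n=\Theta_0\tau_n$, which under the identification $\mathcal{H}\cong V$ is precisely $\tau_n$.

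The step I expect to be the main obstacle is guaranteeing simultaneously that the copy of $\mathcal{H}$ sits isometrically in $\mathcal{H}_1$ and that the map onto it is an \emph{orthogonal} projection; this is what forces the detour through the canonical Parseval frame, since the analysis operator of a general frame is only bounded below and its image would carry a distorted norm. The remaining points, namely positivity and invertibility of $B|_V$ and the invertibility of $L$, reduce to the invertibility of $S^{1/2}$ transported along the isometry $\Theta_0$, and the Riesz-basis conclusion is then immediate from the definition because $L$ is bounded invertible and $\{e_n\}_n$ is orthonormal.
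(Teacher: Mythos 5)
Your proof is correct, and it takes a genuinely different route from the paper. The paper obtains this theorem as a corollary of its general dilation theorem for p-approximate Schauder frames: it sets $f_n=\langle \cdot,\tau_n\rangle$, forms the external direct sum $\mathcal{H}_1=\mathcal{H}\oplus(I_{\ell^2(\mathbb{N})}-P_\tau)(\ell^2(\mathbb{N}))$ with $\omega_n=\tau_n\oplus(I_{\ell^2(\mathbb{N})}-P_\tau)e_n$, and then verifies by direct series manipulations that $\{\omega_n\}_n$ is a frame and satisfies the operator identity $\theta_\omega S_\omega^{-1}\theta_\omega^*=I_{\ell^2(\mathbb{N})}$ characterizing Riesz bases. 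You instead follow the classical ``Naimark'' line: pass to the canonical Parseval frame $S^{-1/2}\tau_n$, embed $\mathcal{H}$ isometrically into $\ell^2(\mathbb{N})$ as $V=\Theta_0(\mathcal{H})$ via the analysis operator, observe $Pe_n=\Theta_0 S^{-1/2}\tau_n$, and recover the general case by twisting $\{e_n\}_n$ with the block-diagonal invertible operator $L=\Theta_0 S^{1/2}\Theta_0^*+(I-P)$. Your argument is shorter and keeps $\mathcal{H}_1=\ell^2(\mathbb{N})$ concrete, with the Riesz-basis property immediate from the definition rather than checked through an operator identity; the trade-off is that it leans essentially on the positive square root $S^{\pm 1/2}$ and on self-adjointness of $P$, which are Hilbert-space tools with no analogue for the p-ASFs the paper is really after --- this is precisely why the paper routes the proof through its Banach-space theorem, where the dilation space must be built as a direct sum with the complemented range $(I-P_{f,\tau})(\ell^p(\mathbb{N}))$ rather than via an isometric analysis operator. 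One small point to make explicit if you write this up: the identification of $\mathcal{H}$ with $V$ under which $\Theta_0\tau_n$ ``is'' $\tau_n$ should be stated once at the outset, since the theorem's conclusion $\tau_n=P\omega_n$ is an equality in the embedded copy.
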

Reason for names Han and Larson in Theorem \ref{NHLDILATIONTHEOREM} is clearly evident whereas that of Naimark is that Theorem \ref{NHLDILATIONTHEOREM} is a particular case of famous Naimark dilation theorem (see the introduction of the paper \cite{CZAJA}).  To the best of our knowledge,  proofs of Theorem \ref{NHLDILATIONTHEOREM} can be found in \cite{CASAZZAHANLARSON}, \cite{HANMEMOIRS}, \cite{SKOPINA} and \cite{CZAJA}. By the way, proof of dilation theorem in the finite dimensional case can  be found in \cite{KORNELSON} and \cite{CASAZZABOOK}.\\
In this paper, we derive  dilation theorem for p-approximate Schauder frames for separable Banach spaces (Theorem \ref{DILATIONTHEOREMPASF}). Theorem \ref{NHLDILATIONTHEOREM} then becomes a particular case of Theorem \ref{DILATIONTHEOREMPASF}.

\section{Dilation theorem for p-approximate Schauder frames}
Following theorem is the fundamental result in frame theory for Hilbert spaces which motivates the definition of frames for Banach spaces.
\begin{theorem}\cite{DUFFIN, HANMEMOIRS}\label{FFF}
	Let $\{\tau_n\}_n$ be a frame for $\mathcal{H}$. Then
	\begin{enumerate}[\upshape(i)]
		\item The map
		$
		S_\tau :\mathcal{H} \ni h \mapsto \sum_{n=1}^\infty \langle h, \tau_n\rangle\tau_n\in
		\mathcal{H}
		$
		is  a well-defined  bounded linear, positive and invertible operator. Further, 
		\begin{align}\label{REP}
		\text{(general Fourier expansion)} \quad h=\sum_{n=1}^\infty \langle h, S_\tau^{-1}\tau_n\rangle \tau_n=\sum_{n=1}^\infty
		\langle h, \tau_n\rangle S_\tau^{-1}\tau_n, \quad \forall h \in
		\mathcal{H}.
		\end{align}
	\item The map 
		$
		\theta_\tau: \mathcal{H} \ni h \mapsto \{\langle h, \tau_n\rangle \}_n \in \ell^2(\mathbb{N})
		$
		is  a well-defined  bounded linear, injective operator.
		\item   Adjoint of $\theta_\tau$ 
	 is given by 
		$
		\theta_\tau^*: \ell^2(\mathbb{N}) \ni \{a_n \}_n \mapsto \sum_{n=1}^\infty a_n\tau_n \in \mathcal{H}
		$
		which is surjective.
		\item  $S_\tau=\theta_\tau^*\theta_\tau.$
		\item $ P_\tau \coloneqq \theta_\tau S_\tau^{-1} \theta_\tau^*:\ell^2(\mathbb{N}) \to \ell^2(\mathbb{N})$ is an orthogonal  projection onto $ \theta_\tau(\mathcal{H})$.
	\end{enumerate}
\end{theorem}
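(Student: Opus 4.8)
The plan is to establish the five assertions in a logically economical order rather than the order listed, taking the analysis operator $\theta_\tau$ as the primitive object from which everything else is assembled. First I would prove (ii): the upper frame bound $\sum_n |\langle h, \tau_n\rangle|^2 \leq b\|h\|^2$ shows at once that $\theta_\tau h = \{\langle h, \tau_n\rangle\}_n$ lies in $\ell^2(\mathbb{N})$, so $\theta_\tau$ is well-defined, and the same inequality gives $\|\theta_\tau h\|^2 = \sum_n |\langle h, \tau_n\rangle|^2 \leq b\|h\|^2$, i.e. $\theta_\tau$ is bounded with $\|\theta_\tau\| \leq \sqrt{b}$. Linearity is immediate from linearity of the inner product in the first slot, and injectivity follows from the lower bound: $\theta_\tau h = 0$ forces $a\|h\|^2 \leq \sum_n |\langle h, \tau_n\rangle|^2 = 0$, hence $h = 0$. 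A point worth recording here, and used repeatedly below, is that the lower bound in fact makes $\theta_\tau$ bounded below, so its range is closed.

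Next I would treat (iii). Since $\theta_\tau$ is a bounded operator between Hilbert spaces its adjoint exists; to identify it I would compute, for $h \in \mathcal{H}$ and $\{a_n\}_n \in \ell^2(\mathbb{N})$,
\begin{align*}
\langle \theta_\tau h, \{a_n\}_n\rangle_{\ell^2} = \sum_{n=1}^\infty \langle h, \tau_n\rangle \overline{a_n} = \Big\langle h, \sum_{n=1}^\infty a_n \tau_n\Big\rangle,
\end{align*}
which forces $\theta_\tau^* \{a_n\}_n = \sum_n a_n \tau_n$ (the series converging because $\theta_\tau^*$ is bounded). Surjectivity of $\theta_\tau^*$ onto $\mathcal{H}$ I would then obtain from the closed-range observation above: because $\theta_\tau$ is bounded below its range is closed, hence $\theta_\tau^*$ also has closed range, and therefore $\operatorname{ran}(\theta_\tau^*) = \overline{\operatorname{ran}(\theta_\tau^*)} = \ker(\theta_\tau)^\perp = \{0\}^\perp = \mathcal{H}$, the middle equality being the standard orthogonal-complement relation and the last using injectivity from (ii).

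Assertion (iv) then drops out by direct composition: $\theta_\tau^* \theta_\tau h = \theta_\tau^*\{\langle h, \tau_n\rangle\}_n = \sum_n \langle h, \tau_n\rangle \tau_n = S_\tau h$. Since $\theta_\tau^*\theta_\tau$ is manifestly a well-defined bounded operator, this identity simultaneously proves that the defining series converges and delivers most of (i) for free: $S_\tau$ is well-defined, linear and bounded as a composition of bounded operators, and positive since $\langle S_\tau h, h\rangle = \|\theta_\tau h\|^2 \geq a\|h\|^2$. The crux of the whole argument is the invertibility of $S_\tau$; here I would read the frame inequalities as the operator inequality $aI \leq S_\tau \leq bI$, and invoke that a positive self-adjoint operator bounded below by a positive multiple of the identity is invertible (its spectrum misses $0$). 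The two general Fourier expansions in \eqref{REP} then follow by inserting $S_\tau^{-1}S_\tau = I = S_\tau S_\tau^{-1}$ around $h$ and using that $S_\tau^{-1}$ is self-adjoint.

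Finally, for (v) I would verify the two defining properties of an orthogonal projection for $P_\tau = \theta_\tau S_\tau^{-1}\theta_\tau^*$. Idempotence uses (iv): $P_\tau^2 = \theta_\tau S_\tau^{-1}(\theta_\tau^*\theta_\tau)S_\tau^{-1}\theta_\tau^* = \theta_\tau S_\tau^{-1} S_\tau S_\tau^{-1}\theta_\tau^* = P_\tau$, and self-adjointness follows from self-adjointness of $S_\tau^{-1}$. For the range, one inclusion is clear since $P_\tau$ factors through $\theta_\tau$ and so maps into $\theta_\tau(\mathcal{H})$; for the reverse, $P_\tau(\theta_\tau h) = \theta_\tau S_\tau^{-1}\theta_\tau^*\theta_\tau h = \theta_\tau S_\tau^{-1} S_\tau h = \theta_\tau h$ shows $P_\tau$ fixes $\theta_\tau(\mathcal{H})$ pointwise, whence $\operatorname{ran}(P_\tau) = \theta_\tau(\mathcal{H})$. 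I expect the only genuinely non-formal step to be the invertibility of $S_\tau$ in (i); everything else is bookkeeping once $\theta_\tau$ and its adjoint are in hand.
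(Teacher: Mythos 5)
Your proposal is correct, and in fact the paper offers no proof of this theorem at all: it is quoted as a known result from the cited references (Duffin--Schaeffer and Han--Larson), so there is no in-paper argument to compare against. Your route --- building everything from the analysis operator $\theta_\tau$, identifying $\theta_\tau^*$ by the duality computation, getting surjectivity from the closed-range/kernel-annihilator relation, reading the frame inequalities as $aI \leq S_\tau \leq bI$ to invert $S_\tau$, and verifying idempotence and self-adjointness of $P_\tau$ directly --- is the standard textbook proof and is sound; the only step worth tightening is the identification of $\theta_\tau^*$, where the convergence of $\sum_n a_n\tau_n$ is cleanest if you first check the formula on finitely supported sequences and then extend by density and boundedness, rather than asserting convergence from boundedness of the adjoint in one breath.
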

 Let $\mathcal{X}$ be a separable Banach space and $\mathcal{X}^*$ be its dual.   General Fourier expansion in Equation (\ref{REP}) allows to define   the notion of Schauder frame for $\mathcal{X}$. 
\begin{definition}\cite{CASAZZA}\label{FRAMING}
	Let $\{\tau_n\}_n$ be a sequence in  $\mathcal{X}$ and 	$\{f_n\}_n$ be a sequence in  $\mathcal{X}^*.$ The pair $ (\{f_n \}_{n}, \{\tau_n \}_{n}) $ is said to be a Schauder frame for $\mathcal{X}$ if 
	\begin{align*}
	x=\sum_{n=1}^\infty
	f_n(x)\tau_n, \quad \forall x \in
	\mathcal{X}.
	\end{align*}
\end{definition} 
Notion of Schauder frame has a very natural generalization which is stated as below.
\begin{definition}\cite{FREEMANODELL, THOMAS}\label{ASFDEF}
	Let $\{\tau_n\}_n$ be a sequence in  $\mathcal{X}$ and 	$\{f_n\}_n$ be a sequence in  $\mathcal{X}^*.$ The pair $ (\{f_n \}_{n}, \{\tau_n \}_{n}) $ is said to be an approximate Schauder frame (ASF) for $\mathcal{X}$ if 
	\begin{align*}
	S_{f, \tau}:\mathcal{X}\ni x \mapsto S_{f, \tau}x\coloneqq \sum_{n=1}^\infty
	f_n(x)\tau_n \in
	\mathcal{X}
	\end{align*}
	is a well-defined bounded linear, invertible operator.
\end{definition} 
Recently, a particular case of Definition \ref{ASFDEF} was studied by same authors of this paper by defining p-approximate Schauder frames (p-ASFs).
\begin{definition}\cite{MAHESHJOHNSON}\label{PASFDEF}
	An ASF $ (\{f_n \}_{n}, \{\tau_n \}_{n}) $  for $\mathcal{X}$	is said to be p-ASF, $p \in [1, \infty)$ if both the maps 
	\begin{align*}
	& \theta_f: \mathcal{X}\ni x \mapsto \theta_f x\coloneqq \{f_n(x)\}_n \in \ell^p(\mathbb{N}), \\
	&\theta_\tau : \ell^p(\mathbb{N}) \ni \{a_n\}_n \mapsto \theta_\tau \{a_n\}_n\coloneqq \sum_{n=1}^\infty a_n\tau_n \in \mathcal{X}
	\end{align*}
	are well-defined bounded linear operators. 
\end{definition}
Advantage of p-ASF is that it gives a result similar to that of Theoerm \ref{FFF}. 
\begin{theorem}\cite{MAHESHJOHNSON}\label{OURS}
	Let $ (\{f_n \}_{n}, \{\tau_n \}_{n}) $ be a p-ASF for $\mathcal{X}$.  Then
	\begin{enumerate}[\upshape(i)]
		\item We have 
		\begin{align*}
		x=\sum_{n=1}^\infty (f_nS_{f, \tau}^{-1})(x) \tau_n=\sum_{n=1}^\infty
		f_n(x) S_{f, \tau}^{-1}\tau_n, \quad \forall x \in
		\mathcal{X}.
		\end{align*}
		\item The map 
		$
		\theta_f: \mathcal{X} \ni x \mapsto \{f_n(x) \}_n \in \ell^p(\mathbb{N})
		$
		is injective. 
		\item 
		The map
		$
		\theta_\tau: \ell^p(\mathbb{N}) \ni \{a_n \}_n \mapsto \sum_{n=1}^\infty a_n\tau_n \in \mathcal{X}
		$
		is surjective.
		\item  $S_{f, \tau}=\theta_\tau\theta_f.$
		\item  $P_{f, \tau}\coloneqq\theta_fS_{f,\tau}^{-1}\theta_\tau:\ell^p(\mathbb{N})\to \ell^p(\mathbb{N})$ is a projection onto   $\theta_f(\mathcal{X})$.
	\end{enumerate}
\end{theorem}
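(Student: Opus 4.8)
The plan is to establish the operator factorization (iv) first, since every other assertion follows from it together with the invertibility of $S_{f,\tau}$ guaranteed by Definition \ref{ASFDEF}. For (iv) I would simply unwind the maps in Definition \ref{PASFDEF}: for $x\in\mathcal{X}$ one has $\theta_\tau\theta_f x=\theta_\tau(\{f_n(x)\}_n)=\sum_{n=1}^\infty f_n(x)\tau_n=S_{f,\tau}x$, where the series converges because $\{f_n(x)\}_n\in\ell^p(\mathbb{N})$ and $\theta_\tau$ is a well-defined bounded operator. Hence $S_{f,\tau}=\theta_\tau\theta_f$.

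For (i) both expansions come from inserting the inverse in the right place. Replacing $x$ by $S_{f,\tau}^{-1}x$ in the defining identity $S_{f,\tau}x=\sum_n f_n(x)\tau_n$ gives $x=S_{f,\tau}S_{f,\tau}^{-1}x=\sum_n f_n(S_{f,\tau}^{-1}x)\tau_n=\sum_n (f_nS_{f,\tau}^{-1})(x)\tau_n$. For the second equality I would instead apply $S_{f,\tau}^{-1}$ to $x=S_{f,\tau}^{-1}S_{f,\tau}x=S_{f,\tau}^{-1}\big(\sum_n f_n(x)\tau_n\big)$ and pass it through the convergent series; this is the only spot where a continuity argument is needed, and it is justified precisely because $S_{f,\tau}^{-1}$ is bounded, hence continuous, so it commutes with the limit of partial sums, yielding $x=\sum_n f_n(x)S_{f,\tau}^{-1}\tau_n$.

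Statements (ii) and (iii) then read off from (iv): since $S_{f,\tau}=\theta_\tau\theta_f$ is invertible, it is in particular injective and surjective. If $\theta_f x=0$ then $S_{f,\tau}x=\theta_\tau\theta_f x=0$ forces $x=0$, so $\theta_f$ is injective; and for any $x\in\mathcal{X}$ we have $x=S_{f,\tau}(S_{f,\tau}^{-1}x)=\theta_\tau(\theta_f S_{f,\tau}^{-1}x)$, exhibiting $x$ in the range of $\theta_\tau$, so $\theta_\tau$ is surjective.

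Finally, for (v) I would verify idempotency and identify the range. Idempotency is a one-line computation using (iv): $P_{f,\tau}^2=\theta_f S_{f,\tau}^{-1}(\theta_\tau\theta_f)S_{f,\tau}^{-1}\theta_\tau=\theta_f S_{f,\tau}^{-1}S_{f,\tau}S_{f,\tau}^{-1}\theta_\tau=\theta_f S_{f,\tau}^{-1}\theta_\tau=P_{f,\tau}$. The range is contained in $\theta_f(\mathcal{X})$ because of the outer factor $\theta_f$, and conversely $P_{f,\tau}$ fixes every $\theta_f x$ since $P_{f,\tau}\theta_f x=\theta_f S_{f,\tau}^{-1}(\theta_\tau\theta_f)x=\theta_f S_{f,\tau}^{-1}S_{f,\tau}x=\theta_f x$; hence the range equals $\theta_f(\mathcal{X})$. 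I do not anticipate a genuine obstacle: the whole argument is algebraic manipulation of the identity $S_{f,\tau}=\theta_\tau\theta_f$ together with invertibility, and the single analytic point—interchanging $S_{f,\tau}^{-1}$ with the infinite sum in (i)—is dispatched by boundedness. Note also that, unlike the Hilbert space situation in Theorem \ref{FFF}, here $P_{f,\tau}$ is only a bounded idempotent and need not be an orthogonal projection, which is exactly the feature the subsequent dilation theorem must accommodate.
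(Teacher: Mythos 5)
Your argument is correct and complete: every part is extracted from the factorization $S_{f,\tau}=\theta_\tau\theta_f$ together with the invertibility of $S_{f,\tau}$ and the boundedness of $S_{f,\tau}^{-1}$, which is exactly how this result is standardly proved. Note that the paper itself states Theorem \ref{OURS} only with a citation to \cite{MAHESHJOHNSON} and gives no proof, so there is no internal argument to compare against; your derivation (including the observation that $P_{f,\tau}$ is merely a bounded idempotent rather than an orthogonal projection) supplies precisely what the cited reference contains.
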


In order to derive the  dilation result we must have a notion of Riesz basis for Banach space. There are various characterizations for Riesz bases for Hilbert spaces (see Theorem 5.5.4 in \cite{OLEBOOK}, Theorem 7.13 in \cite{HEIL}, and a recent generalization by Stoeva  in \cite{STOEVA}) but all uses (implicitly or explicitly) inner product structures and orthonormal bases. These characterizations lead to the   notion of p-Riesz basis for Banach spaces using a single sequence in the Banach space (see \cite{ALDROUBI, CHRISTENSENSTOEVA}) but we do not consider that in this paper. \\
To define the notion of Riesz basis, which is compatible with Hilbert space situation, we first derive an operator-theoretic  characterization for Riesz basis in Hilbert spaces, which does not use the inner product of Hilbert space. To do so, we need a result from Hilbert space frame theory. 
\begin{theorem}\cite{HOLUB}\label{HOLUBTHEOREM} (Holub's theorem)
	A sequence $\{\tau_n\}_n$ in  $\mathcal{H}$ is a
	frame for $\mathcal{H}$	if and only if there exists a surjective bounded linear operator $T:\ell^2(\mathbb{N}) \to \mathcal{H}$ such that $Te_n=\tau_n$, for all $n \in \mathbb{N}$, where $\{e_n\}_n$ is the standard orthonormal basis for $\ell^2(\mathbb{N})$.
\end{theorem}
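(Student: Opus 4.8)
The plan is to prove both implications by exploiting the synthesis operator from Theorem \ref{FFF} together with the operator-theoretic duality between surjectivity and the adjoint being bounded below.

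For the forward direction, suppose $\{\tau_n\}_n$ is a frame for $\mathcal{H}$. I would simply invoke Theorem \ref{FFF}(iii), which guarantees that the synthesis operator $\theta_\tau^*: \ell^2(\mathbb{N}) \ni \{a_n\}_n \mapsto \sum_{n=1}^\infty a_n\tau_n \in \mathcal{H}$ is a well-defined bounded linear surjective operator. Setting $T \coloneqq \theta_\tau^*$ and evaluating at the standard orthonormal basis yields $Te_n = \theta_\tau^* e_n = \tau_n$ for all $n$, which is precisely the operator demanded by the statement.

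For the converse, suppose such a surjective bounded linear $T$ exists. I would pass to the adjoint $T^*: \mathcal{H} \to \ell^2(\mathbb{N})$ and identify its coordinates: for each $h \in \mathcal{H}$, the $n$-th entry of $T^*h$ is $\langle T^*h, e_n\rangle = \langle h, Te_n\rangle = \langle h, \tau_n\rangle$, so that $\|T^*h\|^2 = \sum_{n=1}^\infty |\langle h, \tau_n\rangle|^2$. The upper frame bound is then immediate, since $\sum_{n=1}^\infty |\langle h, \tau_n\rangle|^2 = \|T^*h\|^2 \leq \|T\|^2\|h\|^2$, so that $b = \|T\|^2$ serves. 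For the lower bound, I would use surjectivity of $T$ to obtain a constant $c > 0$ with $\|T^*h\| \geq c\|h\|$ for all $h$, whence $\sum_{n=1}^\infty |\langle h, \tau_n\rangle|^2 \geq c^2\|h\|^2$ and $a = c^2$ serves.

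The main obstacle, and really the only nontrivial ingredient, is the equivalence between surjectivity of $T$ and the adjoint $T^*$ being bounded below. This rests on the open mapping and closed range theorems: surjectivity of $T$ makes $T$ open, which forces $T^*$ to be injective with closed range and hence bounded below; conversely, a bounded-below $T^*$ has closed range, so $T$ has closed and dense range, forcing surjectivity. Once the coordinates of $T^*$ are matched with the frame coefficients, everything else is a direct estimate.
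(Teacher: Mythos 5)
Your proof is correct. Note that the paper does not actually prove this statement --- it is quoted from the literature (Holub's paper) without proof --- so there is no internal argument to compare against; your route is the standard one. Both halves check out: the forward direction is exactly Theorem \ref{FFF}(iii) applied to the synthesis operator, and for the converse the identification $\langle T^*h, e_n\rangle = \langle h, \tau_n\rangle$ reduces the frame inequalities to the boundedness of $T^*$ (upper bound) and the classical fact that a surjective bounded operator between Hilbert spaces has an adjoint bounded below (lower bound), which you justify correctly via the open mapping theorem.
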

In the sequel, given a space $\mathcal{X}$, by $I_\mathcal{X}$ we mean the identity mapping on $\mathcal{X}$.
\begin{theorem}\label{RIESZBASISCHAROURS}
For  sequence $\{\tau_n\}_n$ in  $\mathcal{H}$, the following are equivalent.
\begin{enumerate}[\upshape (i)]
	\item $\{\tau_n\}_n$ is a Riesz basis for  $ \mathcal{H}$. 
	\item $\{\tau_n\}_n$ is a frame  for  $ \mathcal{H}$ and 
	\begin{align}\label{RIESZEQUATIONTHEOREM}
 \theta_\tau S_\tau^{-1} \theta_\tau^*=I_{\ell^2(\mathbb{N})}.
\end{align}
\end{enumerate}	
\end{theorem}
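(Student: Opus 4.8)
The plan is to translate both conditions into statements about the analysis operator $\theta_\tau$ and the synthesis operator $\theta_\tau^*$, and then to read off the equivalence from the operator identity in \eqref{RIESZEQUATIONTHEOREM}. The guiding observation is that, by Theorem \ref{FFF}(v), the operator $P_\tau=\theta_\tau S_\tau^{-1}\theta_\tau^*$ is always the orthogonal projection of $\ell^2(\mathbb{N})$ onto the range $\theta_\tau(\mathcal{H})$; hence \eqref{RIESZEQUATIONTHEOREM} holds precisely when $P_\tau=I_{\ell^2(\mathbb{N})}$, that is, precisely when $\theta_\tau$ is surjective onto $\ell^2(\mathbb{N})$.

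For (i) $\Rightarrow$ (ii), let $\{\omega_n\}_n$ be an orthonormal basis and $T$ a bounded invertible operator with $T\omega_n=\tau_n$, and introduce the unitary $U\colon\ell^2(\mathbb{N})\to\mathcal{H}$ determined by $Ue_n=\omega_n$. I would first check that the synthesis operator factors as $\theta_\tau^*=TU$, since both send $\{a_n\}_n$ to $\sum_{n}a_n\tau_n$; in particular $\theta_\tau^*$ is bounded invertible with $\theta_\tau^*e_n=\tau_n$, so Holub's theorem (Theorem \ref{HOLUBTHEOREM}) immediately shows that $\{\tau_n\}_n$ is a frame. Taking adjoints gives $\theta_\tau=U^*T^*$ and $S_\tau=\theta_\tau^*\theta_\tau=TUU^*T^*=TT^*$. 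Substituting these into the left-hand side of \eqref{RIESZEQUATIONTHEOREM} and cancelling collapses it to $U^*U=I_{\ell^2(\mathbb{N})}$, which establishes (ii).

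For (ii) $\Rightarrow$ (i), the guiding observation shows that \eqref{RIESZEQUATIONTHEOREM} forces $\theta_\tau(\mathcal{H})=\ell^2(\mathbb{N})$, i.e. $\theta_\tau$ is surjective. Since $\theta_\tau$ is also injective by Theorem \ref{FFF}(ii), it is a bounded bijection, hence boundedly invertible by the bounded inverse theorem, and therefore so is its adjoint $\theta_\tau^*$. Now fix any orthonormal basis $\{\omega_n\}_n$ of $\mathcal{H}$ with associated unitary $U\colon\ell^2(\mathbb{N})\to\mathcal{H}$, $Ue_n=\omega_n$, and set $T\coloneqq\theta_\tau^*U^*\colon\mathcal{H}\to\mathcal{H}$. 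As a composition of invertible operators $T$ is bounded invertible, and using $\theta_\tau^*e_n=\tau_n$ from Theorem \ref{FFF}(iii) we get $T\omega_n=\theta_\tau^*U^*\omega_n=\theta_\tau^*e_n=\tau_n$, exhibiting $\{\tau_n\}_n$ as a Riesz basis.

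The computations are routine once the operators are in place; the only points requiring care are the identification of $\ell^2(\mathbb{N})$ with $\mathcal{H}$ through a fixed unitary (so that the operator $T$ in the definition of Riesz basis genuinely acts on $\mathcal{H}$) and the upgrade from bijectivity to bounded invertibility via the bounded inverse theorem. The conceptual heart of the argument, and the step I expect to carry it, is Theorem \ref{FFF}(v): it is what lets me reinterpret the abstract identity \eqref{RIESZEQUATIONTHEOREM} as the surjectivity of the analysis operator, which is exactly the extra condition distinguishing an (exact) Riesz basis from a general frame.
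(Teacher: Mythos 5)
Your argument is correct, and it reaches the same two pillars as the paper's proof --- the factorization $S_\tau=TT^*$ in the forward direction, and the identification of \eqref{RIESZEQUATIONTHEOREM} with surjectivity of $\theta_\tau$ in the converse --- but the execution is genuinely different and noticeably cleaner. For (i) $\Rightarrow$ (ii) the paper expands $\theta_\tau S_\tau^{-1}\theta_\tau^*\{a_n\}_n$ term by term through a chain of series manipulations; your factorization $\theta_\tau^*=TU$ with $U$ the unitary $e_n\mapsto\omega_n$ collapses the same computation to $U^*T^*(TT^*)^{-1}TU=U^*U=I_{\ell^2(\mathbb{N})}$, and it also hands you framehood via Theorem \ref{HOLUBTHEOREM} rather than by citation. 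For (ii) $\Rightarrow$ (i) the roles of the tools are swapped: the paper invokes Holub's theorem to produce a surjective $T$ with $Te_n=\tau_n$ and then proves injectivity through $\operatorname{Kernel}(T)=\theta_\tau(\mathcal{H})^\perp$ (after an informal identification of $\ell^2(\mathbb{N})$ with $\mathcal{H}$ that you make honest by carrying the unitary $U$ explicitly), whereas you read surjectivity of $\theta_\tau$ off Theorem \ref{FFF}(v), upgrade to bounded invertibility by the bounded inverse theorem, and simply define $T\coloneqq\theta_\tau^*U^*$, which is invertible by construction. The one point to tidy in a final write-up is the order of operations in the forward direction: the symbol $\theta_\tau^*$ presupposes that $\theta_\tau$ is a well-defined bounded map into $\ell^2(\mathbb{N})$, so you should first observe that $TU$ is a bounded surjection sending $e_n$ to $\tau_n$ (hence $\{\tau_n\}_n$ is a frame by Theorem \ref{HOLUBTHEOREM}), and only then identify $TU$ with the synthesis operator $\theta_\tau^*$. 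With that reordering the proof is complete, and arguably preferable to the paper's, since every displayed identity is an operator equation rather than a summation.
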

\begin{proof}
\begin{enumerate}[\upshape (i)]
	\item $\implies$ (ii) It is well-known that a Riesz basis is a frame (for instance, see Proposition 3.3.5 in \cite{OLEBOOK}).  Now there exist an orthonormal basis $\{\omega_n\}_n$ for  $\mathcal{H}$ and   bounded invertible operator $T: \mathcal{H}\to \mathcal{H}$ such that $T\omega_n=\tau_n$, for all $n \in \mathbb{N}$. We then have 
	\begin{align*}
	S_\tau h&= \sum_{n=1}^\infty \langle h, \tau_n\rangle\tau_n= \sum_{n=1}^\infty \langle h, T\omega_n\rangle T \omega_n\\
	&=T\left(\sum_{n=1}^\infty \langle T^*h, \omega_n\rangle  \omega_n\right)=TT^*h, \quad \forall h \in \mathcal{H}.
	\end{align*}
	Therefore 
	\begin{align*}
	\theta_\tau S_\tau^{-1} \theta_\tau^*\{a_n\}_n&=\theta_\tau (TT^*)^{-1} \theta_\tau^*\{a_n\}_n=\theta_\tau (T^*)^{-1}T^{-1} \theta_\tau^*\{a_n\}_n\\
	&=\theta_\tau (T^*)^{-1}T^{-1}\left(\sum_{n=1}^\infty a_n\tau_n\right)=\theta_\tau (T^*)^{-1}T^{-1}\left(\sum_{n=1}^\infty a_nT\omega_n\right)\\
	&=\theta_\tau \left(\sum_{n=1}^\infty a_n(T^*)^{-1}\omega_n\right)=\sum_{k=1}^{\infty}\left\langle \sum_{n=1}^\infty a_n(T^*)^{-1}\omega_n, \tau_k\right\rangle e_k \\
	&=\sum_{k=1}^{\infty}\left\langle \sum_{n=1}^\infty a_n(T^*)^{-1}\omega_n, T\omega_k\right\rangle e_k\\
	&=\sum_{k=1}^{\infty}\left\langle \sum_{n=1}^\infty a_n\omega_n, \omega_k\right\rangle e_k=\{a_k\}_k, \quad\forall\{a_n\}_n \in  \ell^2(\mathbb{N}).
	\end{align*}
\item $\implies$ (i) From Holub's theorem  (Theorem \ref{HOLUBTHEOREM}), there exists a surjective bounded linear operator $T:\ell^2(\mathbb{N}) \to \mathcal{H}$ such that $Te_n=\tau_n$, for all $n \in \mathbb{N}$. Since all separable Hilbert spaces are isometrically isomorphic to one another and orthonormal bases map into orthonormal bases, without loss of generality we may assume that $\{e_n\}_n$ is an orthonormal basis for $\mathcal{H}$ and the domain of $T$ is $\mathcal{H}$. Our job now reduces in showing $T$ is invertible. Since $T$ is already surjective, to show it is invertible, it suffices to show it is injective. Let $\{a_n\}_n \in  \ell^2(\mathbb{N}).$ Then $\{a_n\}_n=\theta_\tau (S_\tau^{-1} \theta_\tau^*\{a_n\}_n)$. Hence $\theta_\tau$ is surjective. We now find 
\begin{align*}
\theta_\tau h=\sum_{n=1}^{\infty}\langle h, \tau_n\rangle e_n=\sum_{n=1}^{\infty}\langle h, Te_n\rangle e_n=T^*h, \quad \forall h \in \mathcal{H}.
\end{align*}
Therefore 
	\begin{align*}
\operatorname{Kernel} (T)=T^*(\mathcal{H})^\perp=\theta_\tau(\mathcal{H})^\perp=\mathcal{H}^\perp=\{0\}.
	\end{align*}
Hence $T$ is injective.	
\end{enumerate}		
\end{proof}
Theorem \ref{RIESZBASISCHAROURS} leads to the following definition of p-approximate Riesz basis.
\begin{definition}
A pair $ (\{f_n \}_{n}, \{\tau_n \}_{n}) $ is said to be a p-approximate Riesz basis   for $\mathcal{X}$ if 	it is a p-ASF for $ \mathcal{X}$ and $\theta_fS_{f,\tau}^{-1}\theta_\tau=I_{\ell^p(\mathbb{N})}$.
\end{definition}
We now derive the dilation theorem. 
\begin{theorem}\label{DILATIONTHEOREMPASF}
(Dilation theorem)	Let   $ (\{f_n \}_{n}, \{\tau_n \}_{n}) $ be a p-ASF 	for  $\mathcal{X}$. Then there exist a Banach space $\mathcal{X}_1$	which contains $\mathcal{X}$ isometrically and a p-approximate Riesz basis  $ (\{g_n \}_{n}, \{\omega_n \}_{n}) $   	for  $\mathcal{X}_1$ such that 
	\begin{align*}
	f_n=g_nP_{|\mathcal{X}}, \quad\tau_n=P\omega_n, \quad \forall n \in \mathbb{N},
	\end{align*}
 where $P:\mathcal{X}_1\rightarrow \mathcal{X}$ is onto  projection. 
\end{theorem}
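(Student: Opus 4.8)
The plan is to realize the dilation space as a $p$-direct sum in which $\mathcal{X}$ is the first summand and a second summand is chosen to ``fill in'' the deficiency of the frame. Concretely, I would set $\mathcal{Y} := \ker\theta_\tau \subseteq \ell^p(\mathbb{N})$, a closed subspace carrying the $\ell^p$-norm, and take $\mathcal{X}_1 := \mathcal{X}\oplus_p\mathcal{Y}$ with norm $\|(x,y)\| = (\|x\|_\mathcal{X}^p + \|y\|_p^p)^{1/p}$. Then the map $\iota: x\mapsto(x,0)$ is an isometric embedding of $\mathcal{X}$ into $\mathcal{X}_1$, and $P:(x,y)\mapsto x$ is a norm-one surjective projection satisfying $P\iota = I_\mathcal{X}$; thus $\mathcal{X}$ sits isometrically inside $\mathcal{X}_1$ and $P$ is the required onto projection, with $P|_\mathcal{X} = I_\mathcal{X}$.

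The structural heart of the argument is the decomposition of $\ell^p(\mathbb{N})$ furnished by Theorem \ref{OURS}. There $P_{f,\tau} = \theta_f S_{f,\tau}^{-1}\theta_\tau$ is a bounded projection of $\ell^p(\mathbb{N})$ onto $\theta_f(\mathcal{X})$; since $\theta_f$ is injective and $S_{f,\tau}$ invertible, its kernel is exactly $\ker\theta_\tau = \mathcal{Y}$, so that $\ell^p(\mathbb{N}) = \theta_f(\mathcal{X})\oplus\ker\theta_\tau$ is a topological direct sum. With this in hand I would define the candidate dilated system by $\omega_n := \bigl(\tau_n,\,(I-P_{f,\tau})e_n\bigr)$ and $g_n(x,y) := f_n(x) + e_n^*(y)$, where $\{e_n\}$ is the unit vector basis of $\ell^p(\mathbb{N})$ and $e_n^*$ its coordinate functional. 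The second coordinate of $\omega_n$ lies in the range of $I-P_{f,\tau}$, which equals $\ker\theta_\tau = \mathcal{Y}$, so $\omega_n\in\mathcal{X}_1$; and $g_n\in\mathcal{X}_1^*$. The two compression identities are then immediate: $P\omega_n = \tau_n$, and $g_n(x,0) = f_n(x)$, i.e. $f_n = g_n|_\mathcal{X}$, which is the asserted relation $f_n = g_n P_{|\mathcal{X}}$ because $P|_\mathcal{X} = I_\mathcal{X}$.

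It then remains to check that $(\{g_n\}_n,\{\omega_n\}_n)$ is a $p$-approximate Riesz basis for $\mathcal{X}_1$. I would compute the analysis and synthesis operators, $\theta_g(x,y) = \theta_f x + y$ and $\theta_\omega a = \bigl(\theta_\tau a,\,(I-P_{f,\tau})a\bigr)$, verify their boundedness, and obtain (using $\theta_\tau y = 0$ and $P_{f,\tau}\theta_f = \theta_f$ for $y\in\mathcal{Y}$) the block-diagonal form $S_{g,\omega} = \theta_\omega\theta_g = S_{f,\tau}\oplus I_\mathcal{Y}$, which is invertible with inverse $S_{f,\tau}^{-1}\oplus I_\mathcal{Y}$. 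The decisive computation is the Riesz identity: for $a\in\ell^p(\mathbb{N})$,
\[
\theta_g S_{g,\omega}^{-1}\theta_\omega\, a
= \theta_f S_{f,\tau}^{-1}\theta_\tau a + (I-P_{f,\tau})a
= P_{f,\tau}a + (I-P_{f,\tau})a = a ,
\]
so that $\theta_g S_{g,\omega}^{-1}\theta_\omega = I_{\ell^p(\mathbb{N})}$, which is exactly the defining condition of a $p$-approximate Riesz basis.

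The genuinely clever step, and the main obstacle, is guessing the correct second coordinate $(I-P_{f,\tau})e_n$ of $\omega_n$ together with the coordinate functional $e_n^*$ in $g_n$: only this pairing makes the analysis–synthesis product telescope to $P_{f,\tau} + (I-P_{f,\tau}) = I$. Everything else—boundedness, the block form of $S_{g,\omega}$, and the two compression identities—is routine once this choice is fixed. As a final remark I would observe that for $p=2$ with $\mathcal{X}$ a Hilbert space and $f_n = \langle\,\cdot\,,\tau_n\rangle$, the construction specializes to $\mathcal{X}_1 = \mathcal{H}\oplus_2\theta_\tau(\mathcal{H})^{\perp}$ and recovers Theorem \ref{NHLDILATIONTHEOREM}.
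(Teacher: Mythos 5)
Your construction coincides with the paper's: the paper takes $\mathcal{X}_1=\mathcal{X}\oplus(I_{\ell^p(\mathbb{N})}-P_{f,\tau})(\ell^p(\mathbb{N}))$, $\omega_n=\tau_n\oplus(I_{\ell^p(\mathbb{N})}-P_{f,\tau})e_n$, $g_n=f_n\oplus\zeta_n(I_{\ell^p(\mathbb{N})}-P_{f,\tau})$, and your $\mathcal{Y}=\ker\theta_\tau$ is exactly the range of $I_{\ell^p(\mathbb{N})}-P_{f,\tau}$, so your $\omega_n$, $g_n$, $\theta_g$, $\theta_\omega$, the block form $S_{g,\omega}=S_{f,\tau}\oplus I_{\mathcal{Y}}$, and the telescoping identity $\theta_gS_{g,\omega}^{-1}\theta_\omega=P_{f,\tau}+(I_{\ell^p(\mathbb{N})}-P_{f,\tau})=I_{\ell^p(\mathbb{N})}$ all reproduce the paper's argument, merely organized through operator identities instead of termwise series manipulations. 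The proposal is correct and takes essentially the same approach.
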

\begin{proof}
Let  $\{e_n\}_n$ denote the standard Schauder basis for  $\ell^p(\mathbb{N})$ and let $\{\zeta_n\}_n$ denote the coordinate functionals associated with $\{e_n\}_n$. 	Define
\begin{align*}
\mathcal{X}_1\coloneqq\mathcal{X}\oplus(I_{\ell^p(\mathbb{N})}-P_{f, \tau})(\ell^p(\mathbb{N})), \quad P:\mathcal{X}_1 \ni x\oplus y\mapsto x\oplus 0 \in \mathcal{X}_1
\end{align*}
  and 
\begin{align*}
\omega_n\coloneqq	\tau_n\oplus (I_{\ell^p(\mathbb{N})}-P_{f, \tau})e_n \in \mathcal{X}_1, \quad \quad g_n\coloneqq f_n \oplus \zeta_n (I_{\ell^p(\mathbb{N})}-P_{f, \tau})\in \mathcal{X}_1^*, \quad \forall  n \in \mathbb{N}.
\end{align*}
Then clearly $\mathcal{X}_1$	 contains $\mathcal{X}$ isometrically, $P:\mathcal{X}_1\rightarrow \mathcal{X}$ is onto projection and 
\begin{align*}
	&(g_nP_{|\mathcal{X}})(x)=g_n(P_{|\mathcal{X}}x)=g_n(x)=(f_n \oplus \zeta_n (I_{\ell^p(\mathbb{N})}-P_{f, \tau}))(x\oplus 0)=f_n(x),\quad \forall x \in \mathcal{X},\\
	& P\omega_n=P(\tau_n\oplus (I_{\ell^p(\mathbb{N})}-P_{f, \tau})e_n)=\tau_n, \quad \forall n \in \mathbb{N}.
\end{align*}
 Since the operator $I_{\ell^p(\mathbb{N})}-P_{f, \tau}$ is idempotent, it follows that $(I_{\ell^p(\mathbb{N})}-P_{f, \tau})(\ell^p(\mathbb{N}))$ is a closed subspace of $\ell^p(\mathbb{N})$ and hence a Banach space. Therefore $\mathcal{X}_1$ is a Banach space.  Let $x\oplus y \in \mathcal{X}_1$ and we shall write  $y=\{a_n\}_n \in \ell^p(\mathbb{N})$. We then see that 

\begin{align*}
&\sum_{n=1}^{\infty}(\zeta_n (I_{\ell^p(\mathbb{N})}-P_{f, \tau}))(y)\tau_n=\sum_{n=1}^{\infty}\zeta_n(y)\tau_n-\sum_{n=1}^{\infty}\zeta_n(P_{f, \tau}(y))\tau_n\\
&=\sum_{n=1}^{\infty}\zeta_n(\{a_k\}_{k})\tau_n-\sum_{n=1}^{\infty}\zeta_n(\theta_fS_{f, \tau}^{-1}\theta_\tau(\{a_k\}_{k}))\tau_n\\
&=\sum_{n=1}^{\infty}a_n\tau_n-\sum_{n=1}^{\infty}\zeta_n\left(\theta_fS_{f, \tau}^{-1}\left(\sum_{k=1}^{\infty}a_k\tau_k\right)\right)\tau_n
=\sum_{n=1}^{\infty}a_n\tau_n-\sum_{n=1}^{\infty}\zeta_n\left(\sum_{k=1}^{\infty}a_k\theta_fS_{f, \tau}^{-1}\tau_k\right)\tau_n\\
&=\sum_{n=1}^{\infty}a_n\tau_n-\sum_{n=1}^{\infty}\zeta_n\left(\sum_{k=1}^{\infty}a_k\sum_{r=1}^{\infty}f_r(S_{f, \tau}^{-1}\tau_k)e_r\right)\tau_n
=\sum_{n=1}^{\infty}a_n\tau_n-\sum_{n=1}^{\infty}\sum_{k=1}^{\infty}a_k\sum_{r=1}^{\infty}f_r(S_{f, \tau}^{-1}\tau_k)\zeta_n(e_r)\tau_n\\
&=\sum_{n=1}^{\infty}a_n\tau_n-\sum_{n=1}^{\infty}\sum_{k=1}^{\infty}a_kf_n(S_{f, \tau}^{-1}\tau_k)\tau_n=\sum_{n=1}^{\infty}a_n\tau_n-\sum_{k=1}^{\infty}a_k\sum_{n=1}^{\infty}f_n(S_{f, \tau}^{-1}\tau_k)\tau_n\\
&=\sum_{n=1}^{\infty}a_n\tau_n-\sum_{k=1}^{\infty}a_k\tau_k=0  \quad \text{ and}
\end{align*}
\begin{align*}
& \sum_{n=1}^{\infty}f_n(x)(I_{\ell^p(\mathbb{N})}-P_{f, \tau})e_n=\sum_{n=1}^{\infty}f_n(x)e_n-\sum_{n=1}^{\infty}f_n(x)P_{f, \tau}e_n\\
&=\sum_{n=1}^{\infty}f_n(x)e_n-\sum_{n=1}^{\infty}f_n(x)\theta_fS_{f, \tau}^{-1}\theta_\tau e_n=\sum_{n=1}^{\infty}f_n(x)e_n-\sum_{n=1}^{\infty}f_n(x)\theta_fS_{f, \tau}^{-1}\tau_n\\
&=\sum_{n=1}^{\infty}f_n(x)e_n-\sum_{n=1}^{\infty}f_n(x)\sum_{k=1}^{\infty}f_k(S_{f, \tau}^{-1}\tau_n)e_k
=\sum_{n=1}^{\infty}f_n(x)e_n-\sum_{n=1}^{\infty}\sum_{k=1}^{\infty}f_n(x)f_k(S_{f, \tau}^{-1}\tau_n)e_k\\
&=\sum_{n=1}^{\infty}f_n(x)e_n-\sum_{k=1}^{\infty}\sum_{n=1}^{\infty}f_n(x)f_k(S_{f, \tau}^{-1}\tau_n)e_k=\sum_{n=1}^{\infty}f_n(x)e_n-\sum_{k=1}^{\infty}f_k\left(\sum_{n=1}^{n}f_n(x)S_{f, \tau}^{-1}\tau_n\right)e_k\\
&=\sum_{n=1}^{\infty}f_n(x)e_n-\sum_{k=1}^{\infty}f_k(x)e_k=0.
\end{align*}
By using previous two calculations, we  get 
\begin{align*}
S_{g, \omega}(x\oplus y) &=\sum_{n=1}^{\infty}g_n(x\oplus y)\omega_n=\sum_{n=1}^{\infty}(f_n \oplus \zeta_n (I_{\ell^p(\mathbb{N})}-P_{f, \tau}))(x\oplus y)(\tau_n\oplus (I_{\ell^p(\mathbb{N})}-P_{f, \tau})e_n)\\
&=\sum_{n=1}^{\infty}(f_n(x) + (\zeta_n (I_{\ell^p(\mathbb{N})}-P_{f, \tau}))(y))(\tau_n\oplus (I_{\ell^p(\mathbb{N})}-P_{f, \tau})e_n)\\
&=\left(\sum_{n=1}^{\infty}f_n(x)\tau_n+\sum_{n=1}^{\infty}(\zeta_n (I_{\ell^p(\mathbb{N})}-P_{f, \tau}))(y)\tau_n\right)\oplus\\
&\quad \left(\sum_{n=1}^{\infty}f_n(x)(I_{\ell^p(\mathbb{N})}-P_{f, \tau})e_n+\sum_{n=1}^{\infty}(\zeta_n (I_{\ell^p(\mathbb{N})}-P_{f, \tau}))(y)(I_{\ell^p(\mathbb{N})}-P_{f, \tau})e_n\right)\\
&=(S_{f, \tau}x+0)\oplus \left(0+(I_{\ell^p(\mathbb{N})}-P_{f, \tau})\sum_{n=1}^{\infty}\zeta_n ((I_{\ell^p(\mathbb{N})}-P_{f, \tau})y)e_n\right)\\
&=S_{f, \tau}x\oplus (I_{\ell^p(\mathbb{N})}-P_{f, \tau})(I_{\ell^p(\mathbb{N})}-P_{f, \tau})y=S_{f, \tau}x\oplus (I_{\ell^p(\mathbb{N})}-P_{f, \tau})y\\
&=(S_{f, \tau}\oplus (I_{\ell^p(\mathbb{N})}-P_{f, \tau}))(x\oplus y).
\end{align*}
Since the operator $I_{\ell^p(\mathbb{N})}-P_{f, \tau}$ is idempotent, $I_{\ell^p(\mathbb{N})}-P_{f, \tau}$ becomes identity operator on the space $(I_{\ell^p(\mathbb{N})}-P_{f, \tau})(\ell^p(\mathbb{N}))$.  Hence we get that the operator  $S_{g, \omega}=S_{f, \tau}\oplus (I_{\ell^p(\mathbb{N})}-P_{f, \tau})$ is bounded invertible from  $\mathcal{X}_1$ onto itself. We next show that $ (\{g_n \}_{n}, \{\omega_n \}_{n}) $ is a p-approximate Riesz basis for $\mathcal{X}_1$. For this, first we find $\theta_g$ and $\theta_\omega$. Consider
\begin{align*}
\theta_g(x\oplus y)&=\{g_n(x\oplus y)\}_{n}=\{(f_n \oplus \zeta_n (I_{\ell^p(\mathbb{N})}-P_{f, \tau}))(x\oplus y)\}_{n}\\
&=\{f_n (x) +\zeta_n ((I_{\ell^p(\mathbb{N})}-P_{f, \tau}) y)\}_{n}=\{f_n (x)\}_{n} +\{\zeta_n ((I_{\ell^p(\mathbb{N})}-P_{f, \tau}) y)\}_{n}\\
&=\theta_fx+\sum_{n=1}^{\infty}\zeta_n ((I_{\ell^p(\mathbb{N})}-P_{f, \tau}) y )e_n=\theta_fx+(I_{\ell^p(\mathbb{N})}-P_{f, \tau}) y , \quad\forall x\oplus y \in \mathcal{X}_1
\end{align*}
and 
\begin{align*}
\theta_\omega\{a_n\}_n&=\sum_{n=1}^{\infty}a_n\omega_n=\sum_{n=1}^{\infty}a_n(\tau_n\oplus (I_{\ell^p(\mathbb{N})}-P_{f, \tau})e_n)\\
&= \left(\sum_{n=1}^{\infty}a_n\tau_n\right) \oplus \left(\sum_{n=1}^{\infty}a_n(I_{\ell^p(\mathbb{N})}-P_{f, \tau})e_n\right)\\
&=\theta_\tau\{a_n\}_n\oplus (I_{\ell^p(\mathbb{N})}-P_{f, \tau})\left(\sum_{n=1}^{\infty}a_ne_n\right)\\
&=\theta_\tau\{a_n\}_n\oplus (I_{\ell^p(\mathbb{N})}-P_{f, \tau})\{a_n\}_n, \quad\forall \{a_n\}_n \in \ell^p(\mathbb{N}).
\end{align*}
Therefore
\begin{align*}
P_{g, \omega}\{a_n\}_n&=\theta_g S_{g, \omega}^{-1}\theta_\omega\{a_n\}_n=\theta_gS_{g, \omega}^{-1}(\theta_\tau\{a_n\}_n\oplus (I_{\ell^p(\mathbb{N})}-P_{f, \tau})\{a_n\}_n)\\
&=\theta_g(S_{f, \tau}^{-1}\oplus (I_{\ell^p(\mathbb{N})}-P_{f, \tau}) )(\theta_\tau\{a_n\}_n\oplus (I_{\ell^p(\mathbb{N})}-P_{f, \tau})\{a_n\}_n)\\
&=\theta_g(S_{f, \tau}^{-1} \theta_\tau\{a_n\}_n\oplus  (I_{\ell^p(\mathbb{N})}-P_{f, \tau})^2\{a_n\}_n)\\
&=\theta_g(S_{f, \tau}^{-1} \theta_\tau\{a_n\}_n\oplus  (I_{\ell^p(\mathbb{N})}-P_{f, \tau})\{a_n\}_n)\\
&=\theta_f(S_{f, \tau}^{-1} \theta_\tau\{a_n\}_n)+(I_{\ell^p(\mathbb{N})}-P_{f, \tau})(I_{\ell^p(\mathbb{N})}-P_{f, \tau})\{a_n\}_n\\
&=P_{f, \tau}\{a_n\}_n+(I_{\ell^p(\mathbb{N})}-P_{f, \tau})\{a_n\}_n=\{a_n\}_n, \quad \forall \{a_n\}_n \in \ell^p(\mathbb{N}).
\end{align*}	
\end{proof}
\begin{corollary}
Theorem \ref{NHLDILATIONTHEOREM} is a corollary of Theorem \ref{DILATIONTHEOREMPASF}.	
\end{corollary}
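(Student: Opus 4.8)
The plan is to deduce Theorem \ref{NHLDILATIONTHEOREM} by specializing Theorem \ref{DILATIONTHEOREMPASF} to the Hilbert space setting with $p=2$, and checking that the abstract Banach-space dilation data collapses to the concrete Naimark--Han--Larson data. So let $\{\tau_n\}_n$ be a frame for a separable Hilbert space $\mathcal{H}$. First I would manufacture a $2$-ASF out of this frame: take $\mathcal{X}=\mathcal{H}$, let $S_\tau$ be the frame operator of Theorem \ref{FFF}, and set $f_n\coloneqq\langle\,\cdot\,,\tau_n\rangle\in\mathcal{H}^*$. Then $S_{f,\tau}=\sum_n f_n(\cdot)\tau_n=S_\tau$ is bounded invertible, and by parts (ii),(iii) of Theorem \ref{FFF} the maps $\theta_f=\theta_\tau$ (the analysis map into $\ell^2$) and $\theta_\tau$ (here the synthesis map $\{a_n\}_n\mapsto\sum_n a_n\tau_n$, which is the adjoint $\theta_\tau^*$ of the analysis map) are bounded. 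Thus $(\{f_n\}_n,\{\tau_n\}_n)$ is a $2$-ASF for $\mathcal{H}$, and moreover $P_{f,\tau}=\theta_f S_{f,\tau}^{-1}\theta_\tau$ coincides with the orthogonal projection $P_\tau$ of Theorem \ref{FFF}(v).

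Next I would apply Theorem \ref{DILATIONTHEOREMPASF} to this $2$-ASF. It produces a Banach space $\mathcal{X}_1=\mathcal{H}\oplus(I_{\ell^2}-P_\tau)(\ell^2)$, a projection $P$, and a $2$-approximate Riesz basis $(\{g_n\}_n,\{\omega_n\}_n)$ with $\tau_n=P\omega_n$ and $f_n=g_nP_{|\mathcal{H}}$. The key point is that in the Hilbert setting $(I_{\ell^2}-P_\tau)(\ell^2)$ is the orthogonal complement $\theta_\tau(\mathcal{H})^\perp$, so $\mathcal{X}_1$ carries a genuine Hilbert space inner product (the direct-sum inner product), making it a Hilbert space $\mathcal{H}_1$ containing $\mathcal{H}$ isometrically, with $P$ the orthogonal projection onto $\mathcal{H}$.

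It then remains to identify the $2$-approximate Riesz basis $\{\omega_n\}_n$ with an honest Riesz basis for $\mathcal{H}_1$ in the sense of the opening definition. Here I would invoke Theorem \ref{RIESZBASISCHAROURS}: since $(\{g_n\}_n,\{\omega_n\}_n)$ is a $2$-ASF with $\theta_g S_{g,\omega}^{-1}\theta_\omega=I_{\ell^2}$, and since in the Hilbert space $\mathcal{H}_1$ with $g_n=\langle\,\cdot\,,\omega_n\rangle$ the synthesis operator $\theta_\omega$ is the adjoint of $\theta_g$ so that $S_{g,\omega}$ is the frame operator $S_\omega$ and $\theta_g S_\omega^{-1}\theta_g^*=I_{\ell^2}$ is exactly condition (\ref{RIESZEQUATIONTHEOREM}), the equivalence (ii)$\Rightarrow$(i) of Theorem \ref{RIESZBASISCHAROURS} yields that $\{\omega_n\}_n$ is a Riesz basis for $\mathcal{H}_1$. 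Combining this with $\tau_n=P\omega_n$ gives precisely the conclusion of Theorem \ref{NHLDILATIONTHEOREM}.

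The main obstacle I anticipate is the bookkeeping at the interface between the Banach-space formalism and the Hilbert-space formalism: one must verify that with the choice $f_n=\langle\,\cdot\,,\tau_n\rangle$ (and correspondingly $g_n=\langle\,\cdot\,,\omega_n\rangle$) the Banach-space synthesis operator $\theta_\tau$ of Definition \ref{PASFDEF} really is the Hilbert adjoint $\theta_f^*$, so that $S_{f,\tau}=\theta_\tau\theta_f$ matches $S_\tau=\theta_\tau^*\theta_\tau$ and the two notions of projection $P_{f,\tau}$ and $P_\tau$ agree. Once these identifications are pinned down, the passage from the general $2$-approximate Riesz basis condition to the classical Riesz basis condition via Theorem \ref{RIESZBASISCHAROURS} is the conceptual heart, and everything else is direct-sum arithmetic.
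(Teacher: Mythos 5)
Your proposal is correct and is essentially the paper's own argument: specialize to $f_n=\langle\cdot,\tau_n\rangle$ (so $\theta_f=\theta_\tau$ and $P_{f,\tau}=P_\tau$), apply Theorem \ref{DILATIONTHEOREMPASF}, note that $\mathcal{X}_1$ is then a Hilbert space with $P$ an orthogonal projection, and reduce to Theorem \ref{RIESZBASISCHAROURS} via the identification $g_n=\langle\cdot,\omega_n\rangle$ --- which is precisely the paper's Equation (\ref{CLAIM}), proved there by direct computation, and which does hold (it follows quickly from $f_n(x)=\langle x,\tau_n\rangle$, self-adjointness of $P_\tau$, and $(I_{\ell^2(\mathbb{N})}-P_\tau)y=y$ for $y$ in the second summand). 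The only divergence is at the very end: the paper verifies condition (\ref{RIESZEQUATIONTHEOREM}) by a second long direct computation, whereas you obtain it immediately by observing that, under your identifications ($\theta_g$ the analysis operator of $\{\omega_n\}_n$, $\theta_\omega$ its adjoint, $S_{g,\omega}=S_\omega$), it is literally the $2$-approximate Riesz basis identity $\theta_gS_{g,\omega}^{-1}\theta_\omega=I_{\ell^2(\mathbb{N})}$ already furnished by Theorem \ref{DILATIONTHEOREMPASF} --- a legitimate shortcut within the same approach.
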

\begin{proof}
Let $\{\tau_n\}_n$ be a frame for  $\mathcal{H}$. Define
\begin{align*}
f_n:\mathcal{H} \ni h \mapsto f_n(h)\coloneqq \langle h, \tau_n\rangle \in \mathbb{K}, \quad \forall n \in \mathbb{N}.
\end{align*}	
Then $\theta_f=\theta_\tau$. Note that now $ (\{f_n \}_{n}, \{\tau_n \}_{n}) $ is  a 2-approximate frame   for $\mathcal{H}$. Theorem  \ref{DILATIONTHEOREMPASF} now says that  there exist a Banach space $\mathcal{X}_1$	which contains $\mathcal{H}$ isometrically and a 2-approximate Riesz basis  $ (\{g_n \}_{n}, \{\omega_n \}_{n}) $   	for  $\mathcal{X}_1=\mathcal{H}\oplus(I_{\ell^2(\mathbb{N})}-P_{\tau})(\ell^2(\mathbb{N}))$ such that 
\begin{align*}
f_n=g_nP_{|\mathcal{H}}, \quad\tau_n=P\omega_n, \quad \forall n \in \mathbb{N},
\end{align*}
where $P:\mathcal{X}_1\rightarrow \mathcal{H}$ is onto  projection. Since   $(I_{\ell^2(\mathbb{N})}-P_{\tau})(\ell^2(\mathbb{N}))$ is a closed  subspace of the Hilbert space $\ell^2(\mathbb{N})$, $\mathcal{X}_1$ now becomes a Hilbert space. From the definition of $P$ we get that it is an  orthogonal projection. Now to prove Theorem \ref{NHLDILATIONTHEOREM}, we are left with proving $\{\omega_n\}_n$ is a Riesz basis for  $\mathcal{X}_1$. To show $\{\omega_n\}_n$ is a Riesz basis for  $\mathcal{X}_1$, we use  Theorem \ref{RIESZBASISCHAROURS}. Since $\{\tau_n\}_n$  is  a
frame for $\mathcal{H}$  there exist $a,b>0$ such that 
\begin{align*}
	a\|h\|^2 \leq \sum_{n=1}^\infty |\langle h, \tau_n\rangle|^2\leq b\|h\|^2, \quad \forall h \in \mathcal{H}.
\end{align*}
Let $h\oplus  (I_{\ell^2(\mathbb{N})}-P_{f, \tau})\{a_k\}_k\in \mathcal{X}_1$. Then by noting $b\geq1$, we get 
\begin{align*}
\sum_{n=1}^{\infty}|\langle h\oplus (I_{\ell^2(\mathbb{N})}-P_{ \tau})\{a_k\}_k, \omega_n\rangle|^2&=\sum_{n=1}^{\infty}|\langle h\oplus (I_{\ell^2(\mathbb{N})}-P_{ \tau})\{a_k\}_k, \tau_n\oplus (I_{\ell^2(\mathbb{N})}-P_{ \tau})e_n\rangle|^2\\
&=\sum_{n=1}^{\infty}|\langle h, \tau_n\rangle|^2+\sum_{n=1}^{\infty}|\langle  (I_{\ell^2(\mathbb{N})}-P_{ \tau})\{a_k\}_k,  (I_{\ell^2(\mathbb{N})}-P_{ \tau})e_n\rangle|^2\\
&=\sum_{n=1}^{\infty}|\langle h, \tau_n\rangle|^2+\sum_{n=1}^{\infty}|\langle    (I_{\ell^2(\mathbb{N})}-P_{ \tau})(I_{\ell^2(\mathbb{N})}-P_{ \tau})\{a_k\}_k, e_n\rangle|^2\\
&=\sum_{n=1}^{\infty}|\langle h, \tau_n\rangle|^2+\sum_{n=1}^{\infty}|\langle    (I_{\ell^2(\mathbb{N})}-P_{ \tau})\{a_k\}_k, e_n\rangle|^2\\
&=\sum_{n=1}^{\infty}|\langle h, \tau_n\rangle|^2+\|(I_{\ell^2(\mathbb{N})}-P_{ \tau})\{a_k\}_k\|^2\\
&\leq b\|h\|^2+\|(I_{\ell^2(\mathbb{N})}-P_{\tau})\{a_k\}_k\|^2\\
&\leq b(\|h\|^2+\|(I_{\ell^2(\mathbb{N})}-P_{ \tau})\{a_k\}_k\|^2)\\
&=b\| h\oplus (I_{\ell^2(\mathbb{N})}-P_{\tau})\{a_k\}_k\|^2.
\end{align*}
Previous calculation tells that $\{\omega_n\}_n$  is  a Bessel sequence
 for $\mathcal{X}_1$. Hence $S_\omega:\mathcal{X}_1 \ni x\oplus\{a_k\}_k\mapsto \sum_{n=1}^{\infty} \langle x\oplus\{a_k\}_k, \omega_n\rangle \omega_n\in \mathcal{X}_1$ is a well-defined bounded linear operator. Next we claim that 
 \begin{align}\label{CLAIM}
 g_n(x\oplus\{a_k\}_k)=\langle x\oplus\{a_k\}_k, \omega_n\rangle, \quad \forall x+\{a_k\}_k \in \mathcal{X}_1, \forall n \in \mathbb{N}.
 \end{align}
 Consider 
 \begin{align*}
  g_n(x\oplus\{a_k\}_k)&=(f_n \oplus \zeta_n (I_{\ell^2(\mathbb{N})}-P_{ \tau}))(x\oplus\{a_k\}_k)\\
  &=f_n(x)+\zeta_n ((I_{\ell^2(\mathbb{N})}-P_{ \tau})\{a_k\}_k)
  =f_n(x)+\zeta_n\left(\{a_k\}_k\right)-\zeta_n (P_{ \tau}\{a_k\}_k)\\
  &=f_n(x)+\zeta_n\left(\{a_k\}_k\right)-\zeta_n (\theta_\tau S_{ \tau}^{-1}\theta_\tau^*\{a_k\}_k)
  =f_n(x)+a_n-\zeta_n\left(\theta_\tau S_{ \tau}^{-1}\left(\sum_{k=1}^{\infty}a_k\tau_k\right)\right)\\
  &=f_n(x)+a_n-\zeta_n\left(\sum_{k=1}^{\infty}a_k\theta_\tau S_{ \tau}^{-1}\tau_k\right)
  =f_n(x)+a_n-\zeta_n\left(\sum_{k=1}^{\infty}a_k\sum_{r=1}^{\infty}\langle S_{ \tau}^{-1}\tau_k, \tau_r \rangle e_r\right)\\
  &=f_n(x)+a_n-\sum_{k=1}^{\infty}a_k\langle S_{ \tau}^{-1}\tau_k, \tau_n \rangle= \langle x, \tau_n\rangle+a_n-\sum_{k=1}^{\infty}a_k\langle S_{ \tau}^{-1}\tau_k, \tau_n \rangle \quad \text{ and }
 \end{align*}
  \begin{align*}
 \langle x\oplus\{a_k\}_k, \omega_n\rangle&=\langle x\oplus\{a_k\}_k, \tau_n\oplus (I_{\ell^2(\mathbb{N})}-P_{ \tau})e_n\rangle\\
 &=\langle x, \tau_n\rangle+\langle \{a_k\}_k,  (I_{\ell^2(\mathbb{N})}-P_{ \tau})e_n\rangle
 =\langle x, \tau_n\rangle+\langle \{a_k\}_k,  e_n\rangle+\langle \{a_k\}_k,  P_{ \tau}e_n\rangle\\
 &=\langle x, \tau_n\rangle+a_n-\left \langle\{a_k\}_k, \theta_\tau S_{ \tau}^{-1}\theta_\tau^*e_n\right\rangle=\langle x, \tau_n\rangle+a_n-\left \langle\{a_k\}_k, \theta_\tau S_{ \tau}^{-1}\tau_n\right\rangle\\
 &=\langle x, \tau_n\rangle+a_n- \langle\{a_k\}_k,  \{ \langle S_{ \tau}^{-1}\tau_n, \tau_k \rangle\}_k \rangle=\langle x, \tau_n\rangle+a_n-\sum_{k=1}^{\infty}a_k\overline{\langle S_{ \tau}^{-1}\tau_n, \tau_k \rangle}\\
 &=\langle x, \tau_n\rangle+a_n-\sum_{k=1}^{\infty}a_k\langle\tau_k,S_{ \tau}^{-1}\tau_n \rangle=\langle x, \tau_n\rangle+a_n-\sum_{k=1}^{\infty}a_k\langle S_{ \tau}^{-1}\tau_k, \tau_n \rangle.
 \end{align*}
 Thus Equation (\ref{CLAIM}) holds. Therefore for all $x\oplus\{a_k\}_k\in \mathcal{X}_1$,
 \begin{align*}
 S_{g,\omega}(x\oplus\{a_k\}_k)=\sum_{n=1}^{\infty}g_n(x\oplus\{a_k\}_k)\omega_n=\sum_{n=1}^{\infty}\langle x\oplus\{a_k\}_k, \omega_n\rangle\omega_n=S_{\omega}(x\oplus\{a_k\}_k).
 \end{align*}
 Since $ S_{g,\omega}$ is invertible, $S_{\omega}$ becomes invertible. Clearly $S_{\omega}$ is positive. Therefore 
 \begin{align*}
 \frac{1}{\|S_{\omega}\|^{-1}}\|g\|^2\leq \langle S_{\omega}g, g\rangle\leq \|S_\omega\| \|g\|^2, \quad \forall g \in \mathcal{X}_1. 
 \end{align*}
 Hence
 \begin{align*}
 \frac{1}{\|S_{\omega}\|^{-1}}\|g\|^2\leq \sum_{n=1}^{\infty}|\langle g, \omega_n\rangle|^2\leq \|S_\omega\| \|g\|^2, \quad \forall g \in \mathcal{X}_1. 
 \end{align*}
 That is,  $\{\omega_n\}_n$  is  a frame 
 for $\mathcal{X}_1$.
 
 Finally we  show Equation (\ref{RIESZEQUATIONTHEOREM}) in  Theorem \ref{RIESZBASISCHAROURS} for the frame $\{\omega_n\}_n$. Consider 
 \begin{align*}
 \theta_\omega S_\omega^{-1} \theta_\omega^*\{a_n\}_n&=\theta_\omega S_\omega^{-1}\left(\sum_{n=1}^{\infty}a_n\omega_n\right)=\theta_\omega \left(\sum_{n=1}^{\infty}a_nS_\omega^{-1}\omega_n\right)\\
 &=\sum_{k=1}^{\infty} \left\langle \sum_{n=1}^{\infty}a_nS_\omega^{-1}\omega_n, \omega_k\right \rangle=\sum_{k=1}^{\infty}  \sum_{n=1}^{\infty}a_n\langle S_\omega^{-1}\omega_n, \omega_k \rangle\\
 &=\sum_{k=1}^{\infty}  \sum_{n=1}^{\infty}a_n\langle (S_\tau^{-1}\oplus(I_{\ell^2(\mathbb{N})}-P_{ \tau}) )(\tau_n\oplus (I_{\ell^2(\mathbb{N})}-P_{ \tau})e_n), \tau_k\oplus (I_{\ell^2(\mathbb{N})}-P_{ \tau})e_k \rangle\\
 &=\sum_{k=1}^{\infty}  \sum_{n=1}^{\infty}a_n\langle (S_\tau^{-1}\tau_n\oplus(I_{\ell^2(\mathbb{N})}-P_{ \tau}) ^2e_n, \tau_k\oplus (I_{\ell^2(\mathbb{N})}-P_{ \tau})e_k \rangle\\
 &=\sum_{k=1}^{\infty}  \sum_{n=1}^{\infty}a_n(\langle S_\tau^{-1}\tau_n, \tau_k \rangle+\langle (I_{\ell^2(\mathbb{N})}-P_{ \tau}) e_n,  (I_{\ell^2(\mathbb{N})}-P_{ \tau})e_k \rangle)\\
 &=\sum_{k=1}^{\infty} \left\langle \sum_{n=1}^{\infty}a_nS_\tau^{-1}\tau_n, \tau_k\right \rangle+\sum_{k=1}^{\infty}  \sum_{n=1}^{\infty}a_n\langle (I_{\ell^2(\mathbb{N})}-P_{f, \tau}) e_n,  (I_{\ell^2(\mathbb{N})}-P_{ \tau})e_k \rangle\\
 &=P_\tau\{a_n\}_n+\sum_{k=1}^{\infty}  \sum_{n=1}^{\infty}a_n\langle (I_{\ell^2(\mathbb{N})}-P_{ \tau}) e_n,e_k \rangle\\
 &=P_\tau\{a_n\}_n+\sum_{k=1}^{\infty}\sum_{n=1}^{\infty}a_n\langle  e_n,e_k \rangle-\sum_{k=1}^{\infty}\sum_{n=1}^{\infty}a_n\langle  P_\tau e_n,e_k \rangle\\
 &=P_\tau\{a_n\}_n+\sum_{k=1}^{\infty}a_ke_k-\sum_{k=1}^{\infty}\sum_{n=1}^{\infty}a_n\langle  \theta_\tau S^{-1}_\tau  \theta_\tau ^*e_n,e_k \rangle\\
 &=P_\tau\{a_n\}_n+\sum_{k=1}^{\infty}a_ke_k-\sum_{k=1}^{\infty}\sum_{n=1}^{\infty}a_n\langle   S^{-1}_\tau  \tau_n,\theta_\tau^*e_k \rangle\\
 &=P_\tau\{a_n\}_n+\sum_{k=1}^{\infty}a_ke_k-\sum_{k=1}^{\infty}\sum_{n=1}^{\infty}a_n\langle   S^{-1}_\tau  \tau_n,\tau_k \rangle\\
 &=P_\tau\{a_n\}_n+\sum_{k=1}^{\infty}a_ke_k-P_\tau\{a_n\}_n= \{a_n\}_n,\quad\forall  \{a_n\}_n \in \ell^2(\mathbb{N}).
 \end{align*}
 Thus  $\{\omega_n\}_n$  is  a Riesz basis 
 for $\mathcal{X}_1$  which completes the proof.
\end{proof}

  \section{Acknowledgements}
 First author thanks National Institute of Technology Karnataka (NITK) Surathkal for financial assistance.

 \bibliographystyle{plain}
 \bibliography{reference.bib}

\end{document}